\newtheorem{theorem}{Theorem}
\newtheorem{lemma}[theorem]{Lemma}
\newtheorem{remark}[theorem]{Remark}
\newcommand{\sign}{\operatorname{Sign}}
\newcommand{\CottonTres}{\operatorname{C}}
\newcommand{\CottonDos}{\operatorname{\tilde{C}}}
\newcommand{\CottonOp}{\operatorname{\hat{C}}}
\begin{document}

\title[Conformally symmetric three-manifolds]
{Three-dimensional conformally symmetric manifolds}
\author{\quad E. Calvi\~{n}o-Louzao, E. Garc\'{\i}a-R\'{\i}o, J. Seoane-Bascoy, R. V\'{a}zquez-Lorenzo}
\address{Department of Geometry and Topology, Faculty of Mathematics,
University of Santiago de Compostela, 15782 Santiago de Compostela, Spain}
\email{estebcl@edu.xunta.es, eduardo.garcia.rio@usc.es,javier.seoane@usc.es, ravazlor@edu.xunta.es}
\thanks{Supported by project MTM2009-07756 (Spain)}
\subjclass{53C50, 53B30}
\date{}
\keywords{Cotton tensor, conformally symmetric spaces}

\begin{abstract}
The non-existence of non-trivial conformally symmetric manifolds in the three-dimensional  Riemannian setting is shown. In Lorentzian signature, a  complete local classification is obtained. Furthermore, the isometry classes are examined.
\end{abstract}

\maketitle

\section*{Introduction}

A pseudo-Riemannian manifold is said to be conformally symmetric if its Weyl tensor is parallel, i.e. $\nabla W=0$. It is known that any conformally symmetric Riemannian manifold is either locally symmetric (i.e., $\nabla R=0$) or locally conformally flat (i.e., $W=0$). In the non-trivial case ($\nabla W=0$ and $\nabla R\neq 0$, $W\neq 0$), the manifold $(M,g)$ is said to be \emph{essentially conformally symmetric}. The local and global geometry of essentially conformally symmetric pseudo-Riemannian manifolds has been extensively investigated by Derdzinski and Roter in a series of papers (see \cite{Derdzinski-Roter07,Derdzinski-Roter09} and the references therein for further information). It is worth emphasizing here that since the Weyl tensor vanishes in dimension three, conformally symmetric manifolds have been investigated only in dimension greater than four.
The main goal of this paper is to extend the study of conformal symmetric manifolds to the three-dimensional setting, where all the conformal information is codified by the Cotton tensor.

Let $\rho$ and $\tau$ denote the Ricci tensor and the scalar curvature of $(M,g)$.
Considering the Schouten tensor given by $S_{ij}=\rho_{ij}-\frac{\tau}{2(n-1)}g_{ij}$ where $n=\operatorname{dim}\, M$,  the \emph{Cotton tensor},
$\CottonTres_{ijk}=(\nabla_i S)_{jk}-(\nabla_j S)_{ik}$,
measures the failure of the Schouten tensor  to be a Codazzi tensor (see \cite{York71}).
It is well-known that any locally conformally flat manifold has vanishing Cotton tensor and the converse is also true in dimension $n=3$. Moreover, the Cotton tensor plays an important role in  Riemannian and pseudo-Riemannian geometry. The study of gradient Ricci solitons in locally conformally flat manifolds \cite{Cao-Chen12} or the Goldberg-Sachs theorem in pseudo-Riemannian geometry \cite{Gover-Hill-Nurowski11} are examples where the Cotton tensor appears naturally.

As a matter of notation, we say that a three-dimensional pseudo-Riemannian manifold is \emph{essentially conformally symmetric} if the Cotton tensor is parallel but the manifold is not locally conformally flat, i.e. $\nabla \CottonTres=0$ with $\CottonTres\neq0$. (Note that any three-dimensional locally symmetric manifold is locally conformally flat). Further observe that while conformally symmetry involves third-order derivatives of the metric in dimension $n\geq 4$, it is a fourth-order condition in dimension three.

The main result in the present paper is the following local description of essentially conformally symmetric three-dimensional manifolds.

\begin{theorem}\label{th:main}
A three-dimensional pseudo-Riemannian manifold is essentially conformally symmetric if and only if it is a strict Lorentzian Walker manifold, locally isometric to $(\mathbb{R}^3,(t,x,y),g_\mathfrak{a})$, where
\begin{equation}\label{eq:metric main theorem}
g_\mathfrak{a}=dt dy +dx^2+ (x^3+ \mathfrak{a}(y) x) dy^2,
\end{equation}
for an arbitrary smooth function $\mathfrak{a}(y)$.
\end{theorem}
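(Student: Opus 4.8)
The plan is to establish the two implications separately; the ``if'' direction is a direct verification and the ``only if'' direction carries the content. For the ``if'' direction I would write $g_\mathfrak{a}$ as in \eqref{eq:metric main theorem}, put $f(x,y)=x^3+\mathfrak{a}(y)x$, and compute the Levi-Civita connection: the only non-vanishing Christoffel symbols are $\Gamma^{t}_{xy}=f_x$, $\Gamma^{x}_{yy}=-\tfrac12 f_x$, $\Gamma^{t}_{yy}=f_y$, so $\partial_t$ is a null parallel vector field and $(\mathbb{R}^3,g_\mathfrak{a})$ is a strict Walker manifold. One checks that the only non-zero component of the Ricci tensor is $\rho_{yy}=-\tfrac12 f_{xx}$, whence $\tau=0$ and $S=\rho$, and then that the Cotton tensor has the single independent non-zero component $\CottonTres_{xyy}=-\tfrac12 f_{xxx}=-3\neq 0$; since $f_{xxx}$ is constant, the same Christoffel symbols show $\nabla\CottonTres=0$, so $g_\mathfrak{a}$ is essentially conformally symmetric.

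For the ``only if'' direction, assume $\nabla\CottonTres=0$ with $\CottonTres\neq0$. The plan is first to dualise: in dimension three the Cotton tensor is equivalent, via contraction with the volume element, to the symmetric trace-free Cotton--York tensor $\CottonDos$, and since the metric and the volume element are parallel one gets $\nabla\CottonDos=0$, $\CottonDos\neq0$. Viewing $\CottonDos$ as a parallel self-adjoint endomorphism, its eigenvalues are constant and its (generalised) eigenspaces are parallel distributions. If $\CottonDos$ is diagonalisable, all its eigenspaces are non-degenerate; if moreover it has a single eigenvalue then $\CottonDos=0$ by trace-freeness, a contradiction, so there is a non-degenerate parallel distribution of dimension $1$ or $2$. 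The same holds for the complex-eigenvalue Lorentzian Segre type. In all these cases the de Rham--Wu splitting theorem makes $(M,g)$ locally a metric product $\mathbb{R}\times\Sigma$ of a line and a surface; a short computation shows that the Cotton tensor of such a product is built only from $dK_\Sigma$ ($K_\Sigma$ the Gauss curvature of $\Sigma$) and that $\nabla\CottonTres=0$ forces $\operatorname{Hess}_\Sigma K_\Sigma=0$. Since a (pseudo-)Riemannian surface carrying a non-zero parallel vector field has vanishing curvature, $K_\Sigma$ is locally constant and $\CottonTres=0$, a contradiction. Hence $(M,g)$ must be Lorentzian with $\CottonDos$ non-diagonalisable, i.e.\ possessing a non-trivial nilpotent part (a Jordan block along a null direction); the corresponding null line field is parallel, so $(M,g)$ is a strict Lorentzian Walker manifold. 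This simultaneously yields the non-existence claim in the Riemannian case, where $\CottonDos$ is always diagonalisable.

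To pin down the metric, I would use Walker coordinates adapted to the parallel null line, $g=dt\,dy+dx^2+f(x,y)\,dy^2$, and reuse the Cotton computation above: the condition $\nabla\CottonTres=0$ becomes $\partial_m f_{xxx}=0$ for all $m$, i.e.\ $f_{xxx}=c$ with $c\neq0$ (the case $c=0$ forcing $\CottonTres=0$), so $f=\tfrac{c}{6}x^3+\tfrac12 P(y)x^2+Q(y)x+R(y)$. Then I would normalise by a sequence of coordinate changes: $t\mapsto t+\phi(y)$ removes $R$; $x\mapsto x+\psi(y)$ together with a compensating $t\mapsto t+\chi(x,y)$ removes the $x^2$-term; and a rescaling $y\mapsto\gamma y$, $t\mapsto\gamma^{-1}t$ with $\gamma^2=6/|c|$ (preceded by $x\mapsto -x$ when $c<0$) turns the leading coefficient into $1$, producing $g_\mathfrak{a}$ with $\mathfrak{a}$ an arbitrary smooth function.

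The hard part will be the middle step: organising the Segre-type analysis of the parallel Cotton--York endomorphism in Lorentzian signature and excluding every non-degenerate (product) possibility, the decisive point being that on a product $\mathbb{R}\times\Sigma$ the parallelism of the Cotton tensor forces the Gauss curvature of the surface factor to be parallel, hence constant. By contrast, the explicit Cotton computation for the Walker metric and the final normalisation are routine, the latter only requiring careful bookkeeping of the successive coordinate changes.
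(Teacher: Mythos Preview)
Your overall architecture matches the paper's: reduce to a Walker metric via the Segre analysis of the parallel Cotton--York operator, exclude the product cases, then compute in Walker coordinates and normalise. The product argument and the final normalisation are fine (up to harmless factors of $2$ in the Christoffel symbols).

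There is, however, a genuine gap at the hinge of the argument. From the nilpotent Jordan block you correctly obtain a \emph{parallel null line field}, which makes $(M,g)$ a Walker manifold; but you then declare it a \emph{strict} Walker manifold and pass to coordinates with $f=f(x,y)$. A parallel null line is not the same as a parallel null vector: general three-dimensional Walker coordinates have $f=f(t,x,y)$, and the paper in fact works with this general $f$, derives a sizeable PDE system from $\nabla\CottonDos=0$, and only after several reductions shows that the Ricci operator is $2$-step nilpotent, which is what allows specialising to $f=f(x,y)$. Your computation ``$\nabla\CottonTres=0\Leftrightarrow f_{xxx}=\text{const.}$'' is correct only once that step has been justified. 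A related omission in the Segre analysis: ``non-diagonalisable'' does not yet mean ``nilpotent''; a $2\times2$ Jordan block with eigenvalue $\alpha\neq0$ (and the remaining simple eigenvalue $-2\alpha$) is still possible and must be sent back to the product case via its spacelike $(-2\alpha)$-eigenspace before you can assume $\CottonOp$ is nilpotent.

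The gap is fillable, and in fact your route can be made cleaner than the paper's. Once the non-degenerate cases are excluded, $\CottonOp$ is nilpotent; then either $\CottonDos$ (if $\CottonOp^2=0$) or the bilinear form $g(\CottonOp^2\cdot,\cdot)$ (if $\CottonOp^3=0$, $\CottonOp^2\neq0$) is a parallel, symmetric, trace-free tensor of rank one, hence equal to $\varepsilon\,\omega\otimes\omega$ for some null $1$-form $\omega$. Parallelism of $\omega\otimes\omega$ forces $\nabla\omega=0$ (symmetrising $\nabla\omega\otimes\omega+\omega\otimes\nabla\omega=0$ and evaluating against a vector on which $\omega$ is non-zero), so $\omega^\sharp$ is a parallel null vector and $(M,g)$ is strictly Walker. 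With this lemma inserted, you bypass the paper's PDE system for $f(t,x,y)$ entirely; the price is that you must do the Segre case split a bit more carefully than you have.
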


\section{Proof of Theorem \ref{th:main}}\label{se:Proof}

We start with some previous lemmas, firstly considering the case when the manifold $(M,g)$ decomposes as a product.

\begin{lemma}\label{le:product metric}
A three-dimensional pseudo-Riemannian product manifold $(M=\mathbb{R}\times N,g={\pm} dt^2+g_N)$  is never essentially  conformally symmetric.
\end{lemma}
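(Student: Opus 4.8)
The plan is to exploit the product structure to reduce the fourth-order condition $\nabla\CottonTres=0$ to a statement about the Hessian of the scalar curvature. First I would record the curvature of $(\mathbb{R}\times N,\,g=\varepsilon\,dt^2+g_N)$, $\varepsilon=\pm1$: the field $\partial_t$ is parallel, so $dt$ and $dt\otimes dt$ are $\nabla$-parallel; $\rho(\partial_t,\cdot)=0$ and, $N$ being a surface, $\rho|_{TN}=\rho_N=\tfrac{\tau_N}{2}\,g_N$ with $\tau=\tau_N$. Writing $g_N=g-\varepsilon\,dt\otimes dt$ gives $\rho=\tfrac{\tau}{2}(g-\varepsilon\,dt\otimes dt)$, and since $n=3$,
\[
S=\rho-\tfrac{\tau}{4}\,g=\tau\,Q,\qquad Q:=\tfrac14\,g-\tfrac{\varepsilon}{2}\,dt\otimes dt .
\]
Here $Q$ is $\nabla$-parallel and nondegenerate of rank $3$ (it equals $\tfrac14 g_N$ on $TN$ and $-\tfrac{\varepsilon}{4}\,dt\otimes dt$ on $\partial_t$), regardless of the signature.

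Next I would differentiate twice, using $\nabla Q=0$. This gives $\nabla S=d\tau\otimes Q$, hence $\CottonTres_{ijk}=(\nabla_i\tau)\,Q_{jk}-(\nabla_j\tau)\,Q_{ik}$, and then
\[
(\nabla_l\CottonTres)_{ijk}=H_{li}\,Q_{jk}-H_{lj}\,Q_{ik},\qquad H:=\nabla\nabla\tau .
\]
So $\nabla\CottonTres=0$ says that for each fixed $l$ the covector $v_i:=H_{li}$ satisfies $v_i\,Q_{jk}=v_j\,Q_{ik}$ for all $i,j,k$; if $v\neq0$, fixing an index with $v_{i_0}\neq0$ makes every row $(Q_{jk})_k$ proportional to $v$, forcing $\operatorname{rank}Q\le1$, a contradiction. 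Thus $\nabla\nabla\tau=0$ on $M$.

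Finally, since $N$ is totally geodesic in the product, this restricts to $\nabla^N\nabla^N\tau=0$, so $\nabla^N\tau$ is a parallel vector field on the surface $N$. If $\tau$ were non-constant, then $\nabla^N\tau\neq0$ on some open set; at each such point $R^N(X,Y)\nabla^N\tau=0$ together with the two-dimensional identity $R^N(X,Y)Z=K\big(g_N(Y,Z)X-g_N(X,Z)Y\big)$ gives $K\big(g_N(Y,\nabla^N\tau)\,X-g_N(X,\nabla^N\tau)\,Y\big)=0$ for all $X,Y$, and choosing $X,Y$ linearly independent forces $K=0$ there. Hence $\tau=\tau_N=2K\equiv0$ on that open set, contradicting $\nabla^N\tau\neq0$. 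Therefore $\tau$ is constant, $d\tau=0$, $\CottonTres=0$, and $(M,g)$ is locally conformally flat, so not essentially conformally symmetric.

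I expect no serious obstacle; the point to handle with care is the last step in Lorentzian signature, where $N$ may be a Lorentzian surface and $\nabla^N\tau$ could be null, so one cannot simply invoke a de~Rham splitting — the direct curvature computation above avoids this. One should also check that $Q$ is nondegenerate in each of the three possible signatures, which is what powers the rank argument.
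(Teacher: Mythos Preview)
Your proof is correct and follows essentially the same route as the paper: write the Schouten tensor as $\tau$ times a parallel nondegenerate bilinear form, deduce that $\nabla\CottonTres=0$ forces $\operatorname{Hes}_\tau=0$, and then argue that $\tau$ must be constant so $\CottonTres=0$. Your final step via the two-dimensional curvature identity is in fact slightly cleaner than the paper's appeal to a local splitting $N\cong\mathbb{R}\times\mathbb{R}$, precisely because it handles a possibly null $\nabla^N\tau$ without further comment.
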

\begin{proof}
Let $(a,x)$ and $(b,y)$ be vector fields on $(M,g)$. Hence, the Schouten tensor $S$ satisfies
\[
\begin{array}{lcl}
S((a,x),(b,y)) & = & \rho((a,x),(b,y))-\frac{\tau}{4}g((a,x),(b,y))\\
\noalign{\medskip}
& = & \frac{\tau}{2}g(x,y)-\frac{\tau}{4}({\pm}ab+g(x,y))\\
\noalign{\medskip}
& = & \frac{\tau}{4}({\mp}ab+g(x,y))\\
\noalign{\medskip}
& = & \frac{\tau}{4}({\mp}dt^2+g_N)((a,x),(b,y)).\\
\end{array}
\]
Next observe that $g$ and
$h={\mp}dt^2+g_N$ are metrics on $M$ sharing the same Levi-Civita connection, and hence a straightforward calculation shows that
$\nabla S =\frac{1}{4}d\tau\otimes h$. As a consequence, the $(0,3)$-Cotton tensor is given by
\[
\CottonTres_{\alpha\beta\gamma}=(\nabla_\alpha S)_{\beta \gamma}-(\nabla_\beta S)_{\alpha\gamma}=\frac{1}{4}\left(d\tau(\alpha)h_{\beta\gamma}-d\tau(\beta)h_{\alpha\gamma}\right),
\]
from where we get
\[
\nabla_\mu \CottonTres_{\alpha\beta\gamma}=\dfrac{1}{4}\left(\left(\operatorname{Hes}_\tau\right)_{\mu\alpha}h_{\beta\gamma}-\left(\operatorname{Hes}_\tau\right)_{\mu\beta}h_{\alpha\gamma}  \right),
\]
where $\operatorname{Hes}_\tau$ denotes the Hessian of the scalar curvature,
$\operatorname{Hes}_\tau(X,Y)=X(Y \tau)-(\nabla_XY)\tau$.
Then, $\nabla_i\CottonTres_{tjt}=\frac{1}{4}\left(\operatorname{Hes}_\tau\right)_{ij}$ and therefore the product manifold is conformally symmetric if and only if $\operatorname{Hes}_\tau$ vanishes, i.e., $\nabla \tau$ is a parallel vector field on $N$.
Finally, if $\nabla\tau\neq 0$ then $N$ splits locally as  $N\equiv\mathbb{R}\times\mathbb{R}$, while if $\nabla \tau= 0$ then $N$ has constant curvature. In any case one has that $\nabla \CottonTres=0$ implies $\CottonTres=0$ thus showing that $(M,g)$ is locally conformally flat, which finishes the proof.
\end{proof}

Next lemma shows that any essentially conformally symmetric three-dimensional manifold is locally indecomposable but not irreducible, i.e., it admits a parallel degenerate line field. These manifolds have been extensively investigated in the literature and usually referred to as \emph{Brinkmann waves} or \emph{Walker manifolds} (see for example \cite{Brozos-Garcia-Gilkey-Nikevic-Vazquez09} and the references therein).

\begin{lemma}\label{le:algebraic classification}
Any three-dimensional essentially conformally symmetric manifold $(M,g)$  is a Walker manifold.
\end{lemma}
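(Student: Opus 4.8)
The plan is to analyze the algebraic type of the Ricci operator $\mathrm{Ric}$ (equivalently the Schouten operator $S$, since $S = \rho - \tfrac{\tau}{4}g$ differs from $\rho$ only by a multiple of the identity) at a point where $\CottonTres \neq 0$, and to rule out every case except the one in which $\mathrm{Ric}$ has a degenerate invariant line field. In dimension three a self-adjoint operator on a Lorentzian vector space falls into one of four Segre types: diagonalizable over $\mathbb{R}$ (three real eigenvectors, spanning a nondegenerate frame), or with a complex-conjugate pair of eigenvalues, or with a double null eigenvalue (Jordan block of size two on a degenerate plane), or with a triple null eigenvalue (Jordan block of size three). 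A Walker structure is precisely the assertion that there is a parallel null line field, and the natural route is: first show $\nabla \CottonTres = 0$ forces $S$ (hence $\mathrm{Ric}$) to be \emph{Codazzi up to the obstruction measured by $\CottonTres$} in a way that, combined with parallelism, pins down its eigenvalue structure.

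Concretely, I would proceed as follows. First, recall the contracted second Bianchi identity in dimension three, which expresses $\CottonTres$ in terms of $\nabla \rho$ and $d\tau$; differentiating and using $\nabla \CottonTres = 0$ yields an algebraic constraint relating curvature to $\nabla^2 \rho$. Second, use the fact that $\nabla\CottonTres = 0$ implies $|\CottonTres|^2$ is constant and, more importantly, that the kernel and image distributions of the tensor $\CottonTres$ (viewed appropriately) are parallel; this is the standard mechanism by which a parallel tensor produces parallel distributions. Third, analyze these parallel distributions case by case against the Segre type of $\mathrm{Ric}$: if $\mathrm{Ric}$ were diagonalizable with a nondegenerate eigenframe, the eigenvalue functions would have to be locally constant along the parallel splitting, forcing local reducibility and hence (by Lemma \ref{le:product metric}, after passing to the relevant product) $\CottonTres = 0$, a contradiction; the complex-eigenvalue case is excluded similarly because it again yields a nondegenerate parallel decomposition (over the complexification) incompatible with $\CottonTres \neq 0$; the triple-null case can be shown to force $\CottonTres = 0$ as well by a direct computation in an adapted null frame. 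The only surviving possibility is the double-null Jordan type, whose one-dimensional null eigenspace is exactly a (parallel, by the argument above) degenerate line field — i.e. a Walker structure.

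The main obstacle I expect is the bookkeeping in the double-null and triple-null cases: one must set up an adapted pseudo-orthonormal or null frame $\{\partial_t, \partial_x, \partial_y\}$ with $g(\partial_t,\partial_y) = 1$, write the Ricci operator in Jordan form, and compute $\CottonTres$ and $\nabla\CottonTres$ explicitly to see which frame components survive. The subtlety is ensuring that the line field one extracts is genuinely \emph{parallel} and not merely invariant: this requires feeding the vanishing of $\nabla\CottonTres$ back into the structure equations rather than just using pointwise algebra. A secondary obstacle is organizing the reducibility arguments so that Lemma \ref{le:product metric} can actually be invoked — one needs the parallel nondegenerate distribution to integrate to a genuine local Riemannian/Lorentzian product of the form $\pm dt^2 + g_N$, which is the de Rham–Wu decomposition in the pseudo-Riemannian setting, and one should note that the relevant factors here are one-dimensional so the splitting is automatic once a parallel nondegenerate line field exists.
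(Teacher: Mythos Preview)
Your proposal has a genuine structural gap: you are running the Jordan--form case analysis on the \emph{Ricci} (equivalently Schouten) operator, but nothing in the hypotheses makes $\mathrm{Ric}$ parallel, so there is no direct mechanism by which its eigenspaces become parallel distributions. You acknowledge this (``ensuring that the line field one extracts is genuinely parallel and not merely invariant''), but you do not supply the missing bridge, and in fact there is no clean one along this route: $\nabla\CottonTres=0$ constrains third derivatives of $g$, while the Segre type of $\mathrm{Ric}$ is a first--derivative datum, and matching them through ``feeding $\nabla\CottonTres=0$ back into the structure equations'' is exactly the hard computation you are trying to avoid. Relatedly, your claim that the triple--null Ricci case forces $\CottonTres=0$ is unsupported and almost certainly false as stated; there are Walker metrics with $3$--step nilpotent Ricci operator and nonvanishing Cotton tensor.

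The paper sidesteps all of this by working not with $\mathrm{Ric}$ but with the \emph{Cotton operator} itself. In dimension three the Hodge star turns the $(0,3)$--Cotton tensor into a symmetric trace--free $(0,2)$--tensor $\CottonDos$, hence a self--adjoint operator $\CottonOp$; since $g$ and $\star$ are parallel, $\nabla\CottonTres=0$ is equivalent to $\nabla\CottonOp=0$. Now the ``parallel tensor $\Rightarrow$ parallel eigendistributions'' mechanism applies \emph{to $\CottonOp$ directly}, and the Jordan case split is on $\CottonOp$: in the diagonalizable and complex--eigenvalue cases a non--null eigenspace of $\CottonOp$ gives a parallel non--degenerate line, hence a local product, and Lemma~\ref{le:product metric} kills it; in the nilpotent cases ($\CottonOp$ trace--free forces $2$-- or $3$--step nilpotency) the image, respectively kernel, of $\CottonOp$ is a parallel \emph{null} line, which is precisely the Walker structure. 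Note in particular that both nilpotent cases survive and yield Walker; neither forces $\CottonTres=0$. If you rewrite your argument with $\CottonOp$ in place of $\mathrm{Ric}$, the outline you sketched becomes essentially the paper's proof.
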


\begin{proof}
In what follows, we explicitly use that $\operatorname{dim}\, M=3$ to associate a $(0,2)$-tensor field to the usual $(0,3)$-Cotton tensor.
Let $\star:\Lambda^p(M)\rightarrow\Lambda^{3-p}(M)$ denote the Hodge $\star$-operator and consider the
Cotton $2$-form $C_i= \frac{1}{2}\CottonTres_{nmi}dx^n\wedge dx^m$.
Using the Hodge $\star$-operator, the $(0,2)$-Cotton tensor is associated to the Cotton
$2$-form  by $\star
C_i=\frac{1}{2}\CottonTres_{nmi}\epsilon^{nm\ell}dx^\ell$, thus resulting
\[
\CottonDos_{ij}=\frac{1}{2\sqrt{g}}\CottonTres_{nmi}\epsilon^{nm\ell}g_{\ell j},
\]
where $\epsilon^{123}=1$. Moreover, associated to the $(0,2)$-Cotton tensor, the Cotton operator is defined by $\CottonDos(x,y)=g(\CottonOp(x),y)$. Since the metric tensor and the Hodge $\star$-operator are parallel, the conformal symmetry of any three-dimensional manifold can be equivalently stated in terms of  the $(0,2)$-Cotton tensor $\CottonDos$, or in terms of the Cotton operator~$\CottonOp$.

Next we consider three-dimensional manifolds with parallel Cotton operator and analyze the different possibilities for the Jordan normal form of $\CottonOp$.
Assume that the Cotton operator diagonalizes.
If the Cotton operator is parallel, then the eigenvalues of $\CottonOp$ are constant and the corresponding eigenspaces define parallel distributions on $M$.
Since the Cotton tensor is traceless it has at least two different eigenvalues, unless the manifold is locally conformally flat. In any case, $\CottonOp$ always has a distinguished eigenvalue of multiplicity one and thus $(M,g)$ admits locally a de Rham decomposition as a product $(\mathbb{R}\times N, \pm dt^2+g_N)$, where $N$ is a surface. Then Lemma \ref{le:product metric} shows that $(M,g)$ is locally conformally flat.

Assume that the Cotton operator has a complex eigenvalue. Then, with respect to an orthonormal local frame $\{e_1,e_2,e_3\}$ of signature $(++-)$ one has
\[
\CottonOp=\left(
\begin{array}{ccc}
\lambda & 0 & 0\\
0 & \alpha & \beta\\
0 & -\beta & \alpha
\end{array}
\right).
\]
Since the  Cotton operator   is parallel, the distribution defined by the eigenspace corresponding to the eigenvalue $\lambda$ is parallel. Such distribution is spacelike and hence the manifold decomposes locally as a product $\mathbb{R}\times N$, where $N$ is a surface. Then,   Lemma \ref{le:product metric} implies  that $(M,g)$ is locally conformally flat.

If the minimal polynomial of the Cotton operator $\CottonOp$  has a root of multiplicity two, then there exists a local frame $\{e_1,e_2,e_3\}$, with $g(e_1,e_1)=g(e_2,e_3)=1$ such that $\CottonOp$ expresses with respect to that frame as
\[
\CottonOp=\left(
\begin{array}{ccc}
\lambda & 0 & 0\\
0 & \alpha & 1\\
0 & 0 & \alpha
\end{array}
\right).
\]
Suppose first that $\lambda\neq 0$; in this case, since  $\CottonOp$ is parallel the  spacelike distribution defined by the eigenspace corresponding to the eigenvalue $\lambda$ is also parallel. Then the manifold decomposes locally as a product and Lemma \ref{le:product metric} shows that $(M,g)$ is locally conformally flat. Next assume $\lambda=0$; since the Cotton operator is trace-free it must be $2$-step nilpotent (i.e., $\CottonOp^2=0$, $\CottonOp\neq 0$). Then, $\operatorname{Im}(\CottonOp)=\langle \CottonOp(e_3)\rangle=\langle e_2\rangle$. Moreover, taking into account that the Cotton operator is parallel, we have
\[
0=(\nabla_X \CottonOp)(e_3)=\nabla_X (\CottonOp(e_3))-\CottonOp(\nabla_X e_3),
\]
from where $\CottonOp(\nabla_X e_3)=\nabla_X (\CottonOp(e_3))$ and hence $\operatorname{Im}(\CottonOp)$ is a null and parallel one-dimensional distribution on $(M,g)$, from where one concludes that $g$ is a Walker metric.

Finally, consider the case when the minimal polynomial of the Cotton operator has a root of multiplicity $3$. Since the Cotton operator is trace-free    it  must be  $3$-step nilpotent. Proceeding as before and using the fact that the Cotton operator  is parallel one easily shows that    $\operatorname{Ker}(\CottonOp)$ is a null and parallel distribution  and thus $(M,g)$  is  a Walker manifold.
\end{proof}

\begin{remark}\rm
In the Riemannian case the Cotton operator diagonalizes and thus, the non-existence of three-dimensional essentially conformally symmetric Riemannian manifolds follows from the proof of Lemma \ref{le:algebraic classification}.
\end{remark}

\begin{proof}[Proof of Theorem \ref{th:main}]
Let $(M,g)$ be a three-dimensional essentially conformally symmetric Lorentzian manifold. It follows from
Lemma  \ref{le:algebraic classification} that $(M,g)$ is indecomposable but not irreducible, and hence a Walker manifold.

Three-dimensional Walker manifolds admit local coordinates $(t,x,y)$ where the metric expresses as
(see \cite{Brozos-Garcia-Gilkey-Nikevic-Vazquez09} and the references therein)
\begin{equation}\label{eq:Walker metric}
g= dt dy + dx^2 + f(t,x,y) dy^2,
\end{equation}
for some smooth function $f(t,x,y)$.
In the special case when the parallel degenerate line field is spanned by a parallel null vector field, the coordinates above can be further specialize so that the metric takes the form \eqref{eq:Walker metric} for some function  $f(x,y)$.

Then the Levi-Civita connection  is determined (up to the usual symmetries) by
\begin{equation}\label{eq:Levi Civita}
\nabla_{\partial_t}\partial_y=\tfrac{1}{2}f_t\partial_t,\quad \nabla_{\partial_x}\partial_y=\tfrac{1}{2}f_x\partial_t,\quad
\nabla_{\partial_y}\partial_y=\tfrac{1}{2} \left(f_y+f f_t\right)\partial_t-\tfrac{1}{2}f_x \partial_x -\tfrac{1}{2}f_t \partial_y,
\end{equation}
and the Ricci tensor is given by
\begin{equation}\label{eq:Ricci tensor}
\rho(\partial_t,\partial_y)=\tfrac{1}{2}f_{tt},\quad \rho(\partial_x,\partial_y)=\tfrac{1}{2}f_{tx}, \quad
\rho(\partial_y,\partial_y)=\tfrac{1}{2}\left(f f_{tt}-f_{xx}\right).
\end{equation}
Moreover, the   $(0,2)$-Cotton tensor of a Walker metric (\ref{eq:Walker metric}) is characterized by
\begin{equation}\label{eq:Cotton tensor}
\begin{array}{l}
\CottonDos(\partial_t,\partial_x)=-\frac{1}{4} f_{ttt},\quad \CottonDos(\partial_t,\partial_y)=\frac{1}{4} f_{ttx}, \quad
\CottonDos(\partial_x,\partial_x)=-\frac{1}{2}  f_{ttx},
\\
\noalign{\medskip}
\CottonDos(\partial_x,\partial_y)=\frac{1}{4} \left(2 f_{txx}+f_{tty}-f f_{ttt}\right),
 \\\noalign{\medskip}
\CottonDos(\partial_y,\partial_y)=\frac{1}{4}\left(  f_x f_{tt}-2
   f_{xxx}-f_t f_{tx}-2 f_{txy}+2 f f_{ttx}\right).
\end{array}
\end{equation}
A long but straightforward calculation shows that a Walker metric (\ref{eq:Walker metric}) has parallel Cotton tensor if and only if
\begin{equation}\label{eq:Conditions Walker}
\left\{
\begin{array}{rll}
f_{tttt}=f_{tttx}=f_{ttxx}=f_{txxx}&=&0,
\\\noalign{\medskip}
f_t f_{ttt}-2 f_{ttty}&=&0,
\\\noalign{\medskip}
 2 f_{ttxy}-f_x f_{ttt}&=&0,
 \\[0.05in]
4 f_{txxy}+ \left(2 f_{txx}+ f_{tty}\right)f_t+2 f_{ttyy}-3 f_xf_{ttx}-f_y f_{ttt}-2 f f_{ttty}&=&0,
\\\noalign{\medskip}
 (f_{tx})^2+2 f_{xxxx}+f_t f_{txx}+2 f_{txxy}-f_{xx} f_{tt}-2 f_x f_{ttx} &=&0,
 \\\noalign{\medskip}
 f_{tx} \left(f_t\right)^2+\left(2 f_{xxx}+3 f_{txy}\right) f_t+2 f_{xxxy}+f_{ty} f_{tx}&&\,
 \\\noalign{\medskip}
 + 2   f_{txyy}-f_{xy} f_{tt}- \left(2 f_{txx}+f_t f_{tt}+2
   f_{tty}\right)f_x-\left(f_y+f f_t\right) f_{ttx}&=&0.
\end{array}
\right.
\end{equation}
From the first equation in  (\ref{eq:Conditions Walker}) we get
\[f(t,x,y)=\alpha(x,y)+t \left(\beta(y)+x\, \xi(y)+x^2 \delta(y)\right)+t^2 (\mu(y)+x \,\phi(y))+t^3
   \gamma(y).\]
Now, differentiating the second equation in (\ref{eq:Conditions Walker}) twice  with respect to $t$, it follows that $\gamma(y)=0$. Then the third equation in (\ref{eq:Conditions Walker}) transforms into   $\phi'(y)=0$ and therefore  $\phi(y)=K$. We differentiate now the fourth equation in (\ref{eq:Conditions Walker}) twice with respect to $t$  to obtain  $K=0$. Moreover, differentiating again the fourth equation in (\ref{eq:Conditions Walker})   twice with respect to $x$  we get
\begin{equation}\label{eq: twice x}
\delta (y) \left(2 \delta (y)+\mu'(y)\right)=0,
\end{equation}
and  differentiating once again the fourth equation in (\ref{eq:Conditions Walker}), in this case with respect to $t$,  we obtain
\begin{equation}\label{eq: once t}
\mu(y) \left(2 \delta (y)+\mu'(y)\right)=0.
\end{equation}
Hence,   Equations (\ref{eq: twice x}) and (\ref{eq: once t}) imply that
$\delta(y)=-\frac{1}{2}\mu'(y)$.
At this point, the  only non-zero component of the Cotton tensor is given by
\[
\begin{array}{l}
\CottonDos(\partial_y,\partial_y) =
	\frac{1}{8} \{4 \mu(y) \alpha_x(x,y) - 4 \alpha_{xxx}(x,y)
	- x^3 \mu'(y)^2 + 3 x^2 \xi(y) \mu'(y)
   \\\noalign{\medskip}
   \phantom{\CottonDos(\partial_y,\partial_y) = \frac{1}{8} \{}
   - 2 \beta(y)  \left(\xi(y) -x \mu'(y)\right)
   -2 x \xi(y)^2+4 x \mu''(y)-4 \xi'(y) \}.
\end{array}
\]
Now, differentiating the last equation in (\ref{eq:Conditions Walker}) with respect to $t$ a straightforward calculation shows that
\[\mu(y) \CottonDos(\partial_y,\partial_y)=0.\]
If $\CottonDos(\partial_y,\partial_y)=0$ at some point then the Cotton tensor  vanishes everywhere since it is parallel, and therefore the manifold is locally conformally flat. Thus, we may assume that $\mu(y)=0$.  Now, the fifth equation in (\ref{eq:Conditions Walker}) transforms into
\[
2\alpha_{xxxx}(x,y)+\xi(y)=0,
\]
and therefore  $\alpha(x,y)=-\frac{1}{48}x^4\xi(y)^2+\mathcal{D}(y)x^3+\mathcal{C}(y)x^2+\mathcal{B}(y)x+\mathcal{A}(y)$. Now, a long but straightforward calculation shows that the unique non-zero component of the $(0,2)$-Cotton tensor is given by
\[
\CottonDos(\partial_y,\partial_y)= -\tfrac{1}{4} \left(12 \mathcal{D}(y)+\beta(y) \xi(y)+2
   \xi'(y)\right),
\]
and   differentiating  the last equation in (\ref{eq:Conditions Walker}) with respect to $x$  we get
\[
8 \xi(y) \CottonDos(\partial_y,\partial_y)=0.
\]
As before, if $\CottonDos(\partial_y,\partial_y)$ does not vanish identically, then one has $\xi(y)=0$. Thus, the  Cotton tensor is determined by $\CottonDos(\partial_y,\partial_y)=-3\mathcal{D}(y)$, which implies that  $\mathcal{D}(y)\neq 0$ everywhere unless $(M,g)$ is locally conformally flat; moreover,  the last equation in (\ref{eq:Conditions Walker}) reduces to $\mathcal{D}(y)\beta(y)+\mathcal{D}'(y)=0$, from where   $\beta(y)=-\frac{\mathcal{D}'(y)}{\mathcal{D}(y)}$.

At this point, the metric (\ref{eq:Walker metric}) is determined by
\[
f(t,x,y)=-\frac{\mathcal{D}'(y)}{\mathcal{D}(y)} t+\mathcal{D}(y)x^3+\mathcal{C}(y)x^2+\mathcal{B}(y) x+\mathcal{A}(y).
\]
A straightforward calculation shows that  the Ricci operator of this metric is $2$-step nilpotent. A three-dimensional Walker manifold with $2$-step nilpotent Ricci operator admits a null and parallel vector field \cite{Brozos-Garcia-Gilkey-Nikevic-Vazquez09} and therefore the Walker coordinates $(t,x,y)$ can be specialized so that the metric expresses as
\begin{equation}\label{eq:strict walker}
	g_f=dtdy+dx^2+f(x,y)dy^2.
\end{equation}
Now, the only non-zero component of the Cotton tensor is
\[\CottonDos(\partial_y,\partial_y)=-\tfrac{1}{2}f_{xxx},\]
and  the non-vanishing components of $\nabla \CottonDos$ are given by
\[
	(\nabla_{\partial_x} \CottonDos)(\partial_y,\partial_y)=-\tfrac{1}{2}f_{xxxx}, \qquad (\nabla_{\partial_y} \CottonDos)(\partial_y,\partial_y)=-\tfrac{1}{2}f_{xxxy}.
\]
A direct calculation  shows that a strict Walker metric is essentially conformally symmetric if and only if
\begin{equation}\label{eq: Walker conformally symmetric}
f(x,y)=\kappa x^3+x^2 \mathcal{A}(y)+\mathcal{B}(y) x+\mathcal{C}(y),
\end{equation}
for arbitrary smooth functions $\mathcal{A}$, $\mathcal{B}$ and $\mathcal{C}$ and a non-zero real constant $\kappa$.

In what remains of the proof we show that any metric \eqref{eq: Walker conformally symmetric} is locally isometric to some metric \eqref{eq:metric main theorem} for a suitable function $\mathfrak{a}(y)$.
We proceed as in \cite{Garcia-Gilkey-Nikcevic}. Let $g_f$ be a Walker metric defined by
 \eqref{eq:strict walker} and consider the application:
\[
T(t,x,y)=(t-\phi_y x+ \psi,x+\phi,y),
\]
where $\phi$ and $\psi$ are smooth functions on $y$.  Then $T$ defines an isometry between $g_f$ and another Walker metric $g_{\tilde{f}}$ given by \eqref{eq:strict walker} for some function
\[
\tilde{f}(x,y)=f(x+\phi,y)-2x \phi_{yy}+\phi_y^2+2\psi_y\,.
\]
Now consider a Walker metric  $g_{\mathfrak{b},\kappa}=dt dy +dx^2+(\kappa x^3+\mathfrak{b}(y)x) dy^2$ defined by some arbitrary smooth function $\mathfrak{b}(y)$ and some non-zero constant $\kappa$. Then $T$ defines an isometry between $g_{\mathfrak{b},\kappa}$ and a Walker metric $g_{\tilde{f}}$ where
\[
\begin{array}{rcl}
\tilde{f}(x,y)&=&\kappa (x+\phi)^3+\mathfrak{b}(y)(x+\phi)-2x \phi_{yy}+\phi_y^2+2\psi_y\\
\noalign{\medskip}
&=&\kappa x^3+3\kappa \phi x^2 +(\mathfrak{b}(y)+3 k \phi^2-2\phi_{yy})x+\mathfrak{b}(y)\phi+\kappa\phi^2+\phi_y^2+2\psi_y.
\end{array}
\]
Setting $\mathcal{A}=3\kappa\phi$, $\mathcal{B}$ defined by $\mathcal{B}=\mathfrak{b}+3 k \phi^2-2\phi_{yy}$ and choosing $\psi$ so that $\mathcal{C}=\mathfrak{b}(y)\phi+\kappa\phi^2+\phi_y^2+2\psi_y$,
one has that $T$ defines an isometry between
$g_{\mathfrak{b},\kappa}$ and a Walker metric $g_f$ with $f(x,y)$ given by Equation (\ref{eq: Walker conformally symmetric}).

Finally observe that $\kappa$ is not relevant in the previous discussion since
\[
\tilde{T}(t,x,y)=\left(\sqrt{\varepsilon \kappa}t,\varepsilon x,\dfrac{1}{\sqrt{\varepsilon\kappa}}y\right)
\]
is an isometry between $g_{\mathfrak{b},\kappa}$ and
$g_\mathfrak{a}=dt dy +dx^2+(x^3+\mathfrak{a}(y)x)dy^2$,
where the function $\mathfrak{a}(y)$ is given by $\mathfrak{a}(y)=(|\kappa|)^{-1}\mathfrak{b}(|\kappa|^{-1/2}y)$ and $\varepsilon=\sign(\kappa)$.
This concludes the proof.
\end{proof}

\begin{remark}\rm
Any essentially conformally symmetric metric $g_{\mathfrak{a}}$ given by (\ref{eq:metric main theorem}) is defined by a function $\mathfrak{a}(y)$. Hence, it is natural to consider whether two different functions $\mathfrak{a}(y)$ and $\mathfrak{b}(y)$ determine the same isometry class. In answering this question, first of all observe that the kernel and the image of the Ricci operator $\hat{\rho}$ (defined by $\rho(X,Y)=g(\hat{\rho}X,Y)$) of any metric given by (\ref{eq:metric main theorem}) are generated by
\[
\operatorname{Ker}\hat{\rho}=\langle\{ \partial_t,\partial_x\}\rangle,\qquad \operatorname{Im}\hat{\rho}=\langle\{\partial_t\}\rangle.
\]
Therefore, any (local) isometry must preserve these subspaces.

Let $\Phi=({}^1\Phi(t,x,y),{}^2\Phi(t,x,y),{}^3\Phi(t,x,y))$ be an isometry between $g_\mathfrak{a}$ and $g_\mathfrak{b}$, i.e. $\Phi^\star g_\mathfrak{b}=g_\mathfrak{a}$. Since $\Phi$ has to preserve
$\operatorname{Ker}\hat{\rho}$ and $\operatorname{Im}\hat{\rho}$, it follows that ${}^3\Phi$ depends only on the coordinate $y$ and ${}^2\Phi$ is a function of  the coordinates $x$ and $y$. Moreover,
\[
1=g_\mathfrak{a}(\partial_x,\partial_x)=g_\mathfrak{b}(\Phi_\star \partial_x, \Phi_\star\partial_x)= \left({}^2\Phi_x\right)^2\,,
\]
and thus, ${}^2\Phi(x,y)=\varepsilon_1 x+ \varphi(y)$ with $\varepsilon_1^2=1$. Furthermore, any isometry has to preserve the Ricci tensor (i.e., $\Phi^\star \rho_\mathfrak{b}=\rho_\mathfrak{a}$, where $\rho_{\mathfrak{a}}$ and $\rho_{\mathfrak{b}}$ are the Ricci tensors of $g_\mathfrak{a}$ and $g_\mathfrak{b}$, respectively). Hence, at any point $p=(t,x,y)$
\[
-3x=\rho_{\mathfrak{a}}(\partial_y,\partial_y)_{|_p}
=\rho_{\mathfrak{b}}(\Phi_\star\partial_y,\Phi_\star\partial_y)_{|_{\Phi(p)}}=-3(\varepsilon_1 x+\varphi(y))\left({}^3\Phi_y\right)^2,
\]
from where it follows that  $\varphi(y)=0$, ${}^3\Phi(y)=\varepsilon_2 y+\alpha$ and $\varepsilon_1=1$  with $\varepsilon_2^2=1$ and $\alpha$ an arbitrary constant.
Now,
\[
0=g_\mathfrak{a}(\partial_x,\partial_y)=g_\mathfrak{b}(\Phi_\star\partial_x,\Phi_\star\partial_y)=\varepsilon_2\, {}^1\Phi_x,
\]
from where we obtain that ${}^1\Phi$ depends only on the coordinates $t$ and $y$, i.e. ${}^1\Phi(t,x,y)=\Psi(t,y)$.
In addition,
\[
1=g_\mathfrak{a}(\partial_t,\partial_y)=g_\mathfrak{b}(\Phi_\star\partial_t,\Phi_\star\partial_y)=\varepsilon_2\, \Psi_t.
\]
Then, $\Psi(t,y)=\varepsilon_2 t+\Upsilon(y)$. So, for any point $p$ we have
\[
x^3+\mathfrak{a}(y)x=g_\mathfrak{a}(\partial_y,\partial_y)_{|_p}
=g_\mathfrak{b}(\Phi_\star\partial_y,\Phi_\star\partial_y)_{|_{\Phi(p)}}=x^3+\mathfrak{b}(\varepsilon_2y+\alpha)x+2\varepsilon_2\Upsilon_y.
\]
Hence, $\Upsilon(y)=\beta$. Finally, $\Phi$ is an isometry
from $g_\mathfrak{a}$ to $g_\mathfrak{b}$ if and only if
\[
\Phi=(\varepsilon_2\,t+\beta,x,\varepsilon_2 y+\alpha), \, \text{and}\quad \mathfrak{a}(y)=\mathfrak{b}(\varepsilon_2 y +\alpha), \quad \varepsilon_2^2=1.
\]
This shows that \emph{the moduli space of isometry classes of essentially conformally symmetric three-dimensional manifolds coincides with the space of smooth functions of one-variable $\mathfrak{a}(y)$,  up to constant speed parametrization}.
\end{remark}

\begin{remark}\rm
Any essentially conformally symmetric Lorentzian manifold of  dimension $n\geq 4$ has  recurrent Ricci curvature \cite{Derdzinski-Roter78}.  This behavior also holds in dimension $n=3$ since it can be easily shown that  any  Walker metric  (\ref{eq:metric main theorem}) (indeed, any Walker metric given by \eqref{eq:strict walker}) has  recurrent Ricci curvature.

Clearly  any three-dimensional $2$-symmetric manifold (i.e., $\nabla^2 R=0$ but $\nabla R\neq 0$) has parallel Cotton tensor. From \cite{Alekseevsky-Galaev11, Blanco-Sanchez-Senovilla} it is easy to show that any three-dimensional $2$-symmetric manifold is locally conformally flat. Therefore, a three-dimensional essentially conformally symmetric manifold cannot be  $2$-symmetric.

It follows from the work in  \cite{Garcia-Gilkey-Nikcevic} that essentially conformally symmetric manifolds of dimension three are not locally homogeneous (even more, they cannot be 1-curvature homogeneous).
\end{remark}

\section{Geometric solitons}\label{se:Remarks}

Finally we examine the role of essentially conformally symmetric three-manifolds in the construction of solitons for different geometric evolution equations. In all cases discussed below, solitons correspond to generalized fixed points (i.e., fixed points up to homotheties and diffeomorphisms) of the corresponding flows. Therefore, geometric solitons provide distinguished metrics for the different geometric objects under consideration.

\subsection*{Cotton solitons}
The Cotton flow is a geometric flow associated to the Cotton tensor, which is given by a one-parameter family of metrics $g(t)$ satisfying the equation
$\frac{\partial}{\partial t} g(t)=\mu \CottonDos_{g(t)}$, where $\mu$ is a real constant.
A three-dimensional pseudo-Riemannian manifold is said to be  a  \emph{Cotton soliton} if there exists a vector field $X$ such that $\mathcal{L}_Xg+\CottonDos=\lambda g$ for some real constant $\lambda$, where $\mathcal{L}$ denotes the Lie derivative. The soliton is shrinking, steady or expanding according to $\lambda>0$, $\lambda=0$ or $\lambda<0$, and is said to be a \emph{gradient Cotton soliton} if the Cotton soliton vector field $X$ is the gradient of a suitable potential function, $X=\nabla\varphi$. See \cite{Calvino-Garcia-Vazquez12} and the references therein for more information on Cotton solitons.

It follows from the work in \cite{Calvino-Garcia-Vazquez12} that any three-dimensional essentially  conformally symmetric pseudo-Riemannian manifold is a steady gradient Cotton soliton such that the gradient of the potential function $\varphi$, $\nabla\varphi$, is a null vector field.

Moreover, the existence of other kinds (shrinking or expanding) of Cotton solitons depends on the existence of non-Killing homothetic vector fields on $(M,g)$. Indeed, two Cotton soliton vector fields $X_1$ and $X_2$ differ in a homothetic vector field since
\[
\mathcal{L}_{X_1-X_2}g=\mathcal{L}_{X_1}g-\mathcal{L}_{X_2}g=\lambda_1 g-\CottonDos-\lambda_2 g+\CottonDos
=(\lambda_1-\lambda_2)g,
\]
and hence $(M,g,X_1)$ and $(M,g,X_2)$ are two distinct Cotton solitons if and only if $\xi=X_1-X_2$ is a homothetic vector field. Homothetic vector fields are strongly related with self-similar solutions of the Yamabe flow, specially when the scalar curvature vanishes. In such a case, homothetic vector fields and Yamabe solitons coincide (see \cite{Calvino-Seoane-Vazquez-Vazquez} for further information in Lorentzian Yamabe solitons).

A straightforward calculation (that we omit for sake of brevity) shows that a three-dimensional essentially conformally symmetric manifold $(M,g)$ admits a homothetic vector field (i.e., a vector field $X$ satisfying $\mathcal{L}_Xg=\lambda g$ for some constant~$\lambda$) if and only if the metric \eqref{eq:metric main theorem} is given by a function $\mathfrak{a}(y)$ satisfying
\[
\mathfrak{a}(y)=\frac{\alpha }{(4 \beta -\lambda  y)^4},
\]
for arbitrary constants $\alpha$, $\beta$ and $\lambda$. Moreover the homothetic vector field $\xi$ is given by
\[
\xi(t,x,y)=\left(\frac{5 \lambda t}{4}+\kappa,\frac{\lambda x}{2},\beta-\frac{\lambda  y}{4}\right).
\]

In this case, the metric admits expanding and shrinking Cotton solitons, depending on the sign of $\lambda$, given by the vector
\[
X(t,x,y)=\left(\tilde\kappa+\dfrac{5 \lambda  t}{4}+\frac{3 y}{2},\dfrac{\lambda x}{2},\beta-\dfrac{\lambda y}{4}\right)\,.
\]

\subsection*{Ricci solitons}
A vector field $X$ is said to be a \emph{Ricci soliton} vector field if and only if
$\mathcal{L}_X g+\rho=\lambda g$, and $(M,g,X)$ is said to be a Ricci soliton which is named to be shrinking, steady or expanding according to $\lambda>0$, $\lambda=0$ or $\lambda<0$, respectively.
The Ricci soliton is said to be a gradient Ricci soliton if the Ricci soliton vector field is the gradient of a suitable potential function, $X=\nabla\psi$.

Essentially conformally symmetric three-dimensional manifolds do not admit any gradient Ricci soliton (see for example \cite{Garcia-Gilkey-Nikcevic} and the references therein). However they admit non-gradient Ricci soliton structures in some cases.
In \cite{Brozos-Calvaruso-Garcia-Gavino} the authors study when a strict Walker metric $g_f$ admits a Ricci soliton. They obtain that any strict Walker metric admits such a soliton if and only if the vector field $X$ is given by
\[
X(t,x,y)=\left(t(\lambda-\beta)-x \omega'(y)+\mu(y),\dfrac{1}{2}\lambda x+\omega(y),\beta y+\gamma\right)\,,
\]
for some real constants $\beta$, $\gamma$ and smooth functions $\omega(y)$ and $\mu(y)$ satisfying the partial differential equation
\begin{equation}\label{eq:Ricci soliton Walker metric}
2\beta f-\lambda f+2\mu'(y)-2 x \omega''(y)+f_y(\beta y+\gamma)+f_x(\dfrac{\lambda}{2}x+\omega(y))-\dfrac{1}{2}f_{xx}=0\,.
\end{equation}
Specializing those results for essentially conformally symmetric manifolds as in Theorem \ref{th:main}, Equation \eqref{eq:Ricci soliton Walker metric} becomes
\begin{equation}
\mathfrak{a}'(y) \left(\gamma -\frac{\lambda  y}{4}\right)-\lambda
   \mathfrak{a}(y)-3=0.
\end{equation}
Hence, an essentially conformally symmetric three-manifold is a Ricci soliton if and only if the function $\mathfrak{a}(y)$ in Equation \eqref{eq:metric main theorem} is of the form
\[
\mathfrak{a}(y)=\left\{\begin{array}{lll}
\frac{\alpha }{(4 \gamma -\lambda  y)^4}-\frac{3}{\lambda },& \text{if} &\lambda\neq 0,\\
\noalign{\medskip}
\dfrac{3}{\gamma}y+\alpha, & \text{if}& \lambda=0,
\end{array}
\right.
\]
where $\alpha$ is an arbitrary constant, and the Ricci soliton vector field takes the form
\[
X(t,x,y)=\left(\frac{5\lambda t}{4}+\kappa,\frac{\lambda x}{2},\gamma-\frac{\lambda y}{4}\right)\,,
\]
where $\kappa$ is an arbitrary real constant.

\medskip


\end{document}